\newtheorem{theorem}{Theorem}
\newtheorem{proposition}[theorem]{Proposition}
\theoremstyle{definition}
\newtheorem{definition}[theorem]{Definition}
\theoremstyle{remark}
\newcommand{\ind}[1]{\mathbbm{1}_{\left[ {#1} \right] }}
\newcommand{\calA}{\mathcal{A}}
\newcommand{\bbR}{\mathbb{R}}
\newcommand{\bbZ}{\mathbb{Z}}
\def\BibTeX{{\rm B\kern-.05em{\sc i\kern-.025em b}\kern-.08em
		T\kern-.1667em\lower.7ex\hbox{E}\kern-.125emX}}
\title[Note on the absence of a wetting transition for a pinned GFF in $d\geq3$]{Note on Bolthausen-Deuschel-Zeitouni's paper on the \\
	Absence of a wetting transition for a pinned harmonic crystal \\
	in dimensions
three and larger}
\author[L. Coquille]{Loren Coquille}
\address{L. Coquille\\Institut Fourier,
	UMR 5582 du CNRS\\
	Universit\'e de Grenoble Alpes \\
	100 rue des Math\'ematiques \\
	38610 Gi\`eres, France}
\email{loren.coquille@univ-grenoble-alpes.fr}
\author[P. Mi\l{}o\'s]{Piotr Mi\l{}o\'s}
\address{P. Mi\l{}o\'s\\
	MIMUW, Banacha 2, 02-097 Warszawa, Poland
}
\email{pmilos@mimuw.edu.pl}
\begin{document}
\maketitle

\begin{abstract}
	The article \cite{BDZ} provides a proof of the absence of a wetting transition for the discrete Gaussian free field conditioned to stay positive, and undergoing a weak delta-pinning at height 0. The proof is generalized to the case of a square pinning-potential replacing the delta-pinning, but it relies on a lower bound on the probability for the field to stay above the support of the potential, the proof of which appears to be incorrect. We provide a modified proof of the absence of a wetting transition in the square-potential case, which does not require the aforementioned lower bound. An alternative approach is given in a recent paper by Giacomin and Lacoin \cite{GiaLac}.
\end{abstract}

\section{Definitions and notations}

We keep the notations of \cite{BDZ} except for the field which we call $\phi$ instead of $X$. Let $A$ be a finite subset of $\bbZ^d$, let $\phi=(\phi_{x})_{x\in \bbZ^d}\in\bbR^{\bbZ^d}$ and the Hamiltonian defined as
\begin{equation}
H_A(\phi) = \frac1{8d}\sum_{x,y\in A\cup\partial A\, :\, |x-y|=1}(\phi_{x}-\phi_{y})^2
\end{equation}
where $\partial A$ is the outer boundary of $A$. The following probability measure on $\bbR^A$ defines the discrete Gaussian free field on $A$ (with zero boundary condition) :
\begin{equation}
P_A(d\phi)=\frac1{Z_A}e^{-H_A(\phi)}d\phi_A\delta_0(d\phi_{A^c})
\end{equation}
where $d\phi_A=\prod_{x\in A}d\phi_{x}$ and $\delta_0$ is the Dirac mass at 0. The partition function $Z_A$ is the normalization $Z_A=\int_{\bbR^A}\exp(-H(\phi_A))d\phi_A$.
We will also need the following definition of a set $A$ being $\Delta$-sparse (morally meaning that it has only one pinned point per cell of side-length $\Delta$), which we reproduce from \cite[page 1215]{BDZ} : 
\begin{definition}\label{def-delta-sparse}
Let $N\in\bbZ$, $\Delta>0$, $\Lambda_N=\{-\lfloor N\rfloor/2,\ldots,\lfloor N\rfloor/2\}^d$ and let $l_N^\Delta=\{z_i\}_{i=1}^{|l_N^\Delta|}$ denote a finite collection of points $z_i\in\Lambda_N$ such that for each $y\in\Lambda_N\cap\Delta\bbZ^d$ there is exacly one $z\in l_N^\Delta$ such that $|z-y|<\Delta/10$. Let $A_{l_N^\Delta}=\Lambda_N\backslash l_N^\Delta$.
\end{definition}

\section{Lower bound on the probability of the hard wall condition}
The proof of \cite[Theorem 6]{BDZ} relies on \cite[Proposition 3]{BDZ}. Unfortunately, the proof provided in the paper, when applied with $t>0$ provides a lower bound which is a little bit weaker than what is claimed, namely

\begin{proposition}\label{new-prop3}{Correction of \cite[Proposition 3]{BDZ}} :\\
Assume $d\geq3$ and let $t\geq0$. Then there exist three constants $c_1,c_2,c_3>0$ depending on $t$, and $c_4>0$ independent of $t$, such that, for all $\Delta$ integer large enough
\begin{eqnarray}
\liminf_{N\to\infty}\inf_{l^\Delta_N}\frac1{(2N+1)^d}\log P_{A_{l^\Delta_N}}(X_i\geq t,i\in A_{l^\Delta_N})\geq
-\frac{d\log\Delta}{\Delta^d}+c_1\frac{\log\log\Delta}{\Delta^d}
-\frac{c_2e^{c_4 t\sqrt{\log\Delta}}}{\Delta^{d}(\log\Delta)^{c_3}}
\end{eqnarray}
\end{proposition}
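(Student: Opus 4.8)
The plan is to realise this lower bound through a Cameron--Martin (Gaussian) shift of the field by a deterministic profile $h$ that is discrete-harmonic off the pinned set, and then to estimate separately the deterministic energy cost of the shift and the probability that the re-centred field clears the wall. Write $A=A_{l_N^\Delta}$, denote by $\sigma^2=G(0)$ the infinite-volume variance, and let $h=h_\Delta$ be the function on $\Lambda_N$ that is discrete-harmonic on $A$ with boundary data $h\equiv 0$ on $l_N^\Delta$ and $h\equiv a$ on $\partial\Lambda_N$, where $a=a_\Delta>0$ is a shift height to be tuned. Since in $d\ge 3$ a harmonic function recovers its far field at distance $O(1)$ from an isolated pinned point, $h$ equals $a(1+o(1))$ throughout the bulk and dips to $0$ only in bounded neighbourhoods of the $z_i$. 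Writing $\phi=h+\eta$ and expanding $H_A(h+\eta)=H_A(\eta)+\langle h,\eta\rangle_H+H_A(h)$ in the Dirichlet form $\langle\cdot,\cdot\rangle_H$ of $H_A$, the decisive point of the harmonic choice is that, after summation by parts, $\langle h,\eta\rangle_H$ only involves the values of $\eta$ on $l_N^\Delta\cup\partial\Lambda_N$, where $\eta$ vanishes $P_A$-almost surely; hence the linear term drops and
\begin{equation}
P_{A_{l_N^\Delta}}(\phi_i\ge t,\ i\in A_{l_N^\Delta})
= e^{-H_{A_{l_N^\Delta}}(h)}\,
P_{A_{l_N^\Delta}}\big(\eta_i\ge t-h_i,\ i\in A_{l_N^\Delta}\big).
\end{equation}
This exact factorisation is the backbone: the first factor will contribute the deterministic leading order and the $\log\log\Delta$ gain, the second the $t$-dependent error.

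For the energy I would exploit that the $z_i$ are $\Delta$-separated, so the dips of $h$ around distinct pinned points essentially decouple and each contributes $\tfrac12 a^2\,\mathrm{cap}$, where $\mathrm{cap}=1/\sigma^2$ is the capacity of a single site for the walk attached to $H_A$. With $|l_N^\Delta|\sim (2N+1)^d/\Delta^d$ this gives
\begin{equation}
\frac{1}{(2N+1)^d}\,H_{A_{l_N^\Delta}}(h)=\frac{a^2}{2\sigma^2\,\Delta^d}\,(1+o(1)).
\end{equation}
I would then take $a$ of the order of the typical maximum $m_\Delta=\sigma\sqrt{2d\log\Delta}+\cdots$ of $\eta$ over a single $\Delta$-cell, fine-tuned by lower-order amounts (of size $O(t)$ and $O(\log\log\Delta/\sqrt{\log\Delta})$) chosen to balance the two factors. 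Because the second-order term of $m_\Delta$ is negative, $a^2/2\sigma^2=d\log\Delta-c_1\log\log\Delta+O(t\sqrt{\log\Delta})$, which reproduces both the leading term $-d\log\Delta/\Delta^d$ and the advertised gain $+c_1(\log\log\Delta)/\Delta^d$, the $O(t\sqrt{\log\Delta})/\Delta^d$ remainder being polynomial in $t\sqrt{\log\Delta}$ and hence, for $\Delta$ large, dominated by the exponential error term below.

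The delicate factor, and the step whose treatment in \cite{BDZ} is at fault, is the positivity probability $P_A(\eta_i\ge t-h_i)$. Its binding constraints are that $\eta$ stay above $t-a\approx -m_\Delta$ in the bulk and above $t-h_i\approx t$ on the $O(1)$-neighbourhoods of the pinned points. Rather than invoke the (apparently incorrect) bulk lower bound of \cite{BDZ}, I would use the FKG inequality for $\eta$, whose covariance matrix is entrywise nonnegative: partitioning $A$ into the $\Delta$-cells $C_k$, the events $\{\eta_i\ge t-h_i:\,i\in C_k\}$ are increasing, so
\begin{equation}
P_{A_{l_N^\Delta}}\big(\eta_i\ge t-h_i,\ i\in A_{l_N^\Delta}\big)
\ \ge\ \prod_{k} P_{A_{l_N^\Delta}}\big(\eta_i\ge t-h_i,\ i\in C_k\big),
\end{equation}
which reduces everything to a single cell, \emph{uniformly} in the sparse configuration $l_N^\Delta$. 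For one cell the complementary probability is controlled by a union bound over its $\sim\Delta^d$ sites against the Gaussian tail $P(\eta_x<t-a)\le \exp(-(a-t)^2/2\sigma^2)$; expanding $(a-t)^2=a^2-2at+t^2$, the factor $e^{-a^2/2\sigma^2}$ against the $\Delta^d$ sites yields $\Delta^{-d}$ up to the polynomial $(\log\Delta)^{-c_3}$ coming from the $\log\log\Delta$ term in $a$, while the cross term $e^{at/\sigma^2}$ with $a\asymp\sigma\sqrt{2d\log\Delta}$ produces precisely $e^{c_4 t\sqrt{\log\Delta}}$, with $c_4$ independent of $t$. Taking $-\log$, summing over the $\sim(2N+1)^d/\Delta^d$ cells and dividing by $(2N+1)^d$ then gives the error term $-c_2\,e^{c_4 t\sqrt{\log\Delta}}/(\Delta^d(\log\Delta)^{c_3})$.

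The \emph{main obstacle} I anticipate is exactly this last estimate: making the per-cell positivity bound rigorous with the correct $t$-dependence and, crucially, uniform over all admissible $l_N^\Delta$ and stable as $N\to\infty$ (the latter being automatic once the per-cell bounds are $N$-independent and only the cell count grows). The tension is intrinsic: raising $a$ to make the bulk constraint cheap costs Dirichlet energy, while lowering $a$ to save energy forces $\eta$ across the wall near the pinned points, so the quantitative form of the error term is the residue of balancing these effects—and it is precisely the over-optimistic version of this balance that the argument in \cite{BDZ} mishandles. Ensuring that the short-range correlations of $\eta$ are controlled well enough that the intra-cell union bound (or a second-moment refinement of it) is not lossy, while keeping every constant uniform in the geometry of $l_N^\Delta$, is where the real work lies.
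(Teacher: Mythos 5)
Your overall strategy --- a Cameron--Martin shift by a profile that is harmonic off the pinned points, a Dirichlet-energy cost of $\tfrac{a^2}{2\sigma^2}$ per pin, FKG localisation, and Gaussian tail estimates for the recentred field --- is essentially the architecture of the proof of \cite[Proposition 3]{BDZ} itself (note that the present paper does not reprove that proposition; it only states the result of re-running the BDZ proof while keeping track of the $t$-dependence that BDZ drop between their (2.4) and (2.5)). However, two steps of your execution fail, one fixable and one fatal. The fixable one: your ``exact factorisation'' is false as stated. Under $P_{A_{l_N^\Delta}}$ the field vanishes on \emph{all} of $A^c$, including $\partial\Lambda_N$, whereas your $h$ equals $a$ there; hence $\eta=\phi-h$ equals $-a$, not $0$, on $\partial\Lambda_N$ (equivalently, the extension of $h|_A$ by zero fails to be harmonic at sites adjacent to $\partial\Lambda_N$), so the linear term does not drop but survives as a surface term of order $a^2N^{d-1}$. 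This is harmless after taking $\tfrac{1}{(2N+1)^d}\log(\cdot)$ and $N\to\infty$, but the identity you call the backbone must be replaced by an inequality carrying this correction.

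The fatal one: your per-cell positivity estimate is inconsistent with your own choice of $a$, and the inconsistency sits exactly where the content of the proposition lies. To gain the term $+c_1\tfrac{\log\log\Delta}{\Delta^d}$ you must take $\tfrac{a^2}{2\sigma^2}=d\log\Delta-c_1\log\log\Delta+O(t\sqrt{\log\Delta})$. With this $a$, the tail bound you invoke, $P(\eta_x<t-a)\le e^{-(a-t)^2/2\sigma^2}$, is of order $\Delta^{-d}(\log\Delta)^{+c_1}e^{c_4t\sqrt{\log\Delta}}$: a \emph{positive} power of $\log\Delta$, not the $(\log\Delta)^{-c_3}$ you assert. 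Summed over the $\asymp\Delta^d$ sites of a cell this gives $(\log\Delta)^{c_1}e^{c_4t\sqrt{\log\Delta}}\to\infty$, so the union bound yields nothing --- even for $t=0$. Within your architecture the $\log\log\Delta$ gain has a single source which you omit: the prefactor in the Gaussian tail, $P(\eta_x<t-a)\le\frac{\sigma}{(a-t)\sqrt{2\pi}}\,e^{-(a-t)^2/2\sigma^2}$, whose extra factor $(a-t)^{-1}\asymp(\log\Delta)^{-1/2}$ turns the per-cell sum into $(\log\Delta)^{c_1-1/2}e^{c_4t\sqrt{\log\Delta}}$ and forces the constraint $c_1<\tfrac12$. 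Moreover, for $t>0$ even this refined sum diverges as $\Delta\to\infty$, so no union bound can work; you must instead use the Gaussian FKG (Sidak) inequality site by site, $P\bigl(\bigcap_x\{\eta_x\ge t-h_x\}\bigr)\ge\prod_x(1-p_x)\ge\exp\bigl(-C\sum_xp_x\bigr)$ valid once each $p_x\le\tfrac12$ (true for $\Delta\ge\Delta_0(t)$), which converts the divergent sum into precisely the admissible error $\exp\bigl(-c_2e^{c_4t\sqrt{\log\Delta}}(\log\Delta)^{-c_3}\bigr)$ with $c_3=\tfrac12-c_1$; the sites at distance $O(1)$ from a pin, where $h_x$ is only a fixed fraction of $a$, must also be summed separately (they contribute a vanishing amount per cell). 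As written, your argument proves at best $-\tfrac{d\log\Delta+O(1)}{\Delta^d}-\tfrac{O(t\sqrt{\log\Delta})}{\Delta^d}$, i.e.\ the $\log\log$ gain replaced by an $O(1)$ loss; that weaker bound is exactly what does \emph{not} suffice for Section 3 of this paper, where $c_1\log\log\Delta$ must eventually dominate the $(a,b)$-dependent constant $J'$.
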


The statement of \cite[Proposition 3]{BDZ} only contains the first two terms. The dependence in $t$ vanishes between equations $(2.4)$ and $(2.5)$ in \cite{BDZ}. Note that for $t=0$  the third term is irrelevant and the bound coincides with the one stated in the paper.

\section{Proof of the absence of a wetting transition in the square-potential case}

Let us introduce the following notations 
\begin{align*}
\hat\xi_N&=\sum_{x\in\Lambda_N}\ind{|\phi_x|\leq a}, \quad \tilde\xi_N=\sum_{x\in\Lambda_N}\ind{\phi_x\in [0,a]},\\
\Omega_A^+&=\{\phi_x\geq0,\,\forall x\in A\}, \quad \Omega_N^+=\{\phi_x\geq0,\,\forall x\in \Lambda_N\}\\
\calA &=\{x\in\Lambda_N : \phi_x\in[0,a]\}
\end{align*}
and the following probability measure with square-potential pinning :
$$ \tilde
P_{N,a,b}(d\phi)=\frac1{\tilde Z_{N,a,b}}\exp\left(-H(\phi)+\sum_{x\in\Lambda_N}b\ind{\phi_x\in[0,a]}\right)d\phi_{\Lambda_N}\delta_0(d\phi_{\Lambda_N^c})$$
in contrast with the measure used in \cite{BDZ} :
$$ \hat
P_{N,a,b}(d\phi)=\frac1{\hat Z_{N,a,b}}\exp\left(-H(\phi)+\sum_{x\in\Lambda_N}b\ind{\phi_x\in[-a,a]}\right)d\phi_{\Lambda_N}\delta_0(d\phi_{\Lambda_N^c}).$$
Observe that
$$ \tilde
P_{N,a,b}(\tilde\xi_N<\epsilon N^d | \Omega_N^+)=\hat
P_{N,a,b}(\hat\xi_N<\epsilon N^d | \Omega_N^+)$$

\begin{theorem}{(Absence of wetting transition, \cite[Theorem 6]{BDZ})}\\
	Assume $d\geq3$ and let $a,b>0$ be arbitrary. Then there exists $\epsilon_{b,a},\eta_{b,a}>0$ such that 
	\begin{align}
	 \tilde
	P_{N,a,b}(\tilde\xi_N>\epsilon_{b,a} N^d | \Omega_N^+)\geq 1-\exp(-\eta_{b,a}N^d).
	\end{align}
	provided $N$ is large enough.
\end{theorem}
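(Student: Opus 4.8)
The plan is to show that the "wet" event $\{\tilde\xi_N \le \epsilon N^d\}$ is exponentially costly under $\tilde P_{N,a,b}(\cdot \mid \Omega_N^+)$ by comparing the partition-function weight of a well-chosen set of "dry" configurations (many pinned sites) against the weight of the wet configurations. Because of the identity relating $\tilde P$ and $\hat P$ on these events, I may work with whichever measure is convenient; I would exploit the positivity constraint $\Omega_N^+$ throughout, since under it the indicator $\ind{\phi_x\in[0,a]}$ agrees with $\ind{|\phi_x|\le a}$ up to the positivity already imposed.

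First I would set up the first-moment/entropy bound. Writing the conditional probability as a ratio of integrals over $\Omega_N^+$, I would bound the numerator (configurations with few pinned sites) by discarding the pinning reward on the at-most-$\epsilon N^d$ pinned sites and controlling the combinatorial entropy of choosing which sites are pinned, which contributes a factor like $\exp(c(\epsilon)N^d)$ with $c(\epsilon)\to 0$ as $\epsilon\to0$. The denominator is bounded below by restricting to a reference strategy that pins a positive density of sites: here is where Proposition~\ref{new-prop3} enters. Using a $\Delta$-sparse set $l_N^\Delta$, I would force the field to stay above height $t$ (for a suitable small $t$, or $t=0$) on $A_{l_N^\Delta}$, pin the sparse lattice points near height $0$, and collect a pinning reward of order $b$ per cell. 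Proposition~\ref{new-prop3} gives the crucial lower bound $-\tfrac{d\log\Delta}{\Delta^d} + \tfrac{c_1\log\log\Delta}{\Delta^d} - \cdots$ on the probability of staying positive, so the net free-energy gain per unit volume from this strategy is at least $\tfrac{1}{\Delta^d}\bigl(b/2 - d\log\Delta + c_1\log\log\Delta\bigr)$ up to the $t$-dependent correction term.

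The key quantitative step is then to choose the parameters so that the reference (dry) strategy strictly dominates the wet configurations at the level of free energy per volume. I would fix $\Delta=\Delta(b)$ large enough that $c_1\log\log\Delta$ dominates the $d\log\Delta$ cost relative to the available pinning reward $b$, using that for large $\Delta$ the second term in Proposition~\ref{new-prop3} eventually wins over the first for fixed $b$; the correction term $-c_2 e^{c_4 t\sqrt{\log\Delta}}/(\Delta^d(\log\Delta)^{c_3})$ is made negligible by taking $t$ small (or $t=0$, in which case it vanishes). With $\Delta$ fixed, I would then choose $\epsilon=\epsilon_{b,a}$ small enough that the entropy cost $c(\epsilon)$ of the wet event is strictly smaller than the free-energy surplus of the dry strategy; this yields a strictly positive exponential rate $\eta_{b,a}>0$ in the bound, for all $N$ large.

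The main obstacle I anticipate is the delicate balancing in this last step: the pinning reward $b$ is an arbitrary fixed constant, so one cannot simply take it large, and the gain of the dry strategy comes not from a large reward but from the subleading $\log\log\Delta/\Delta^d$ term in Proposition~\ref{new-prop3} surviving against the leading $\log\Delta/\Delta^d$ entropic cost of creating the positivity event. Making this comparison rigorous requires carefully tracking how the constants $c_1,c_2,c_3$ (depending on $t$) and the entropy rate $c(\epsilon)$ interact, and verifying that the window of admissible $(\Delta,t,\epsilon)$ is nonempty for every fixed $a,b>0$; the whole point of the present note is precisely that the corrected Proposition~\ref{new-prop3}, with its weaker $t$-dependent error term, still leaves this window open, so the bookkeeping must be done without appealing to the stronger (erroneous) bound from \cite{BDZ}.
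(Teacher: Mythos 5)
Your overall skeleton (write the conditional probability as a ratio, bound the wet event's entropy by $\epsilon(J+1-\log\epsilon)$, lower bound the denominator via sparse pinned configurations and Proposition~\ref{new-prop3}) matches the paper, but your key quantitative step rests on a false claim, and this is a genuine gap. You assert that for fixed $b$ one can take $\Delta$ large enough that ``$c_1\log\log\Delta$ dominates the $d\log\Delta$ cost,'' i.e.\ that the second term of Proposition~\ref{new-prop3} eventually wins over the first. It never does: $\log\log\Delta/\log\Delta\to0$. Consequently the quantity you compute for a \emph{single} reference $\Delta$-sparse configuration, namely $\frac{1}{\Delta^d}\bigl(J'-d\log\Delta+c_1\log\log\Delta\bigr)$ with $J'$ a constant determined by $a,b$ (your ``$b/2$''), is negative for every large $\Delta$ --- and Proposition~\ref{new-prop3} is only available for $\Delta$ large. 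Since $b$ is fixed and may be arbitrarily small (so $J'$ arbitrarily negative), no admissible choice of $\Delta$ makes one dry strategy beat the wet entropy; the argument collapses exactly at the point you call the key quantitative step.

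The missing idea is that the $d\log\Delta$ cost is not \emph{beaten} by the $\log\log\Delta$ term; it is \emph{cancelled} by the positional entropy of the pinned points, which restricting to a single reference strategy throws away. In the paper the denominator is lower bounded by the full sum over all $\Delta$-sparse pinned sets $A$: each cell of side $\Delta$ offers of order $\Delta^d$ admissible positions for its pinned point (Definition~\ref{def-delta-sparse}), so the number of sparse sets is of order $\exp\bigl((N/\Delta)^d(d\log\Delta+c_0)\bigr)$, and this per-cell entropy $d\log\Delta+c_0$ exactly cancels the per-cell cost $-d\log\Delta$ coming from Proposition~\ref{new-prop3}. What survives is $(J'+c_0+c_1\log\log\Delta)/\Delta^d$, which is positive for $\Delta=\Delta(J')$ large because $\log\log\Delta$ only needs to beat the \emph{constant} $J'$, not $\log\Delta$. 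Two further points. First, your fallback ``take $t$ small'' does not help: for any fixed $t>0$ the correction $e^{c_4t\sqrt{\log\Delta}}/(\log\Delta)^{c_3}$ grows faster than $\log\log\Delta$, so the proof must be run at $t=0$ exactly. Second, making $t=0$ legitimate is itself the substantive content of the note: it is achieved via the FKG inequality together with a harmonic-extension argument giving $P_N(\Omega^+_{A^c}\mid\calA\supset A)\geq P_{A^c}(\Omega^+_{A^c})$ and $P_N(\calA\supset A)\geq[c(1/2\wedge a)]^{|A|}$, which crucially uses that the one-sided potential $[0,a]$ produces nonnegative boundary data --- a step your sketch gestures at through the $\tilde P$/$\hat P$ identity but never carries out.
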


\begin{proof}
	Let us compute the probability of the complementary event and provide bounds on the numerator and the denominator corresponding to the conditional probability :
\begin{equation}\label{proba}
\tilde
P_{N,a,b}(\tilde \xi_N<\epsilon N^d | \Omega_N^+)=\frac{\tilde
P_{N,a,b}(\{\tilde \xi_N<\epsilon N^d \}\cap \Omega_N^+)}{\tilde
P_{N,a,b}( \Omega_N^+)}
\end{equation}

\subsection{Lower bound on the denominator}

Writing 
\begin{equation}
\exp(\sum_{x\in\Lambda_N}b\ind{\phi_x\in[0,a]})=\prod_{x\in\Lambda_N}((e^b-1)\ind{\phi_{x}\in[0,a]}+1)
\end{equation}
and using the FKG inequality, we get

\begin{align}
\tilde
P_{N,a,b}( \Omega_N^+)&\stackrel{FKG}{\geq}\frac{Z_N}{\tilde
  Z_{N,a,b}}\sum_{A\subset\Lambda_N}(e^b-1)^{|A|}
\underbrace{P_N(\calA\supset A)}_{(*)}
\underbrace{P_N(\Omega_{A^c}^+|\calA\supset A)}_{(**)}
\underbrace{P_N(\Omega_{A}^+|\calA\supset A)}_{=1}.
\end{align}
Let us first bound the term $(**)$:
\begin{align}
(**)&=P_N(\phi\geq0 \,on\,A^c|\phi\in[0,a] \,on\, A)
=\int_{[0,a]^{A}}
P_N(\phi\geq0 \,on\,A^c|
\phi=\psi \, on \, A)g(\psi)d\psi
\end{align}
for some density function $g$. Let $\tilde{\psi}$ be the harmonic extension of $\psi$ to $\Lambda_N\backslash A$. Since $\tilde{\psi}\geq0$, we have
\begin{align}
(**)&=\int_{[0,a]^{A}}
P_N(\phi+\tilde{\psi}\geq0 \,on\,A^c|
\phi=0 \, on \, A)g(\psi)d\psi\\
&=\int_{[0,a]^{A}}
P_{A^c}(\phi+\tilde{\psi}\geq0 \,on\,A^c)g(\psi)d\psi\\
&\geq P_{A^c}(\Omega_{A^c}^+)
\end{align}
For the term $(*)$, we write  $A=\{x_1,\ldots,x_{|A|}\},$ and $A_i=\{x_{i+1},\ldots,x_{|A|}\},$
\begin{align}
(*)&=P_N(\phi\in[0,a]\,on\, A)\\
&=\prod_{i=1}^{|A|}P_N(\phi_{x_i}\in[0,a] |
\phi_{x_{i+1}},\ldots,\phi_{x_{|A|}}\in[0,a])\\
&=\prod_{i=1}^{|A|}\int_{[0,a]^{A_i}}
P_N(\phi_{x_i}\in[0,a] |
\phi=\psi \, on \, A_i)g_i(\psi)d\psi
\end{align}
for some density function $g_i$. Let $\tilde{\psi}$ be the harmonic extension of $\psi$ to $\Lambda_N\backslash A_i$, we have
\begin{align}
(*)&=\prod_{i=1}^{|A|}\int_{[0,a]^{A_i}}
P_N(\phi_{x_i}+\tilde{\psi}_{x_i}\in[0,a] |
\phi=0 \, on \, A_i)g_i(\psi)d\psi\\
&=
\prod_{i=1}^{|A|}\int_{[0,a]^{A_i}}
P_{A_i^c}(\phi_{x_i}+\tilde{\psi}_{x_i}\in[0,a])g_i(\psi)d\psi
\\
&\label{ineq}\geq \prod_{i=1}^{|A|}P_{A_i^c}(\phi_{x_i}\in[0,a] )\\
&\geq [c(1/2\wedge a)]^{|A|}
\end{align}
for some $c=c(d)>0$, since the variance of the free field is bounded
in $d\geq3$. The inequality \eqref{ineq} comes from the fact that $P_{A_i^c}(\phi_{x_i}+\tilde{\psi}_{x_i}\in[0,a])\geq P_{A_i^c}(\phi_{x_i}\in[0,a])$ since $\tilde{\psi}_{x_i}\in[0,a]$ and $\phi_{x_i}$ is a centered Gaussian variable.\\
Hence,
\begin{align}
\tilde
P_{N,a,b}( \Omega_N^+)&{\geq}\frac{Z_N}{\tilde
  Z_{N,a,b}}\sum_{A\subset\Lambda_N}\exp(J'{|A|})P_{A^c}(\Omega_{A^c}^+)
\end{align}
with $J'=\log(e^b-1)+\log c+\log(1/2\wedge a)$.

\subsection{Upper bound on the numerator}

\begin{align}
\tilde
P_{N,a,b}(\{\tilde \xi_N<\epsilon N^d \}\cap \Omega_N^+)&=
\frac{Z_N}{\tilde
  Z_{N,a,b}}\sum_{A : |A|<\epsilon N^d}(e^b-1)^{|A|}
\underbrace{P_N(\calA\supset A)}_{\leq(1/2\wedge a)^{|A|}}
\underbrace{P_N(\Omega_{N}^+|\calA\supset
  A)}_{\leq1}\\
&\leq \frac{Z_N}{\tilde
  Z_{N,a,b}} \sharp\{A : |A|<\epsilon N^d\}\exp(J\epsilon N^d)
\end{align}
with $J=\log(e^b-1)+\log(1/2\wedge a)$, where $\sharp X$ denotes the cardinality of the set $X$.

\subsection{Upper bound on \eqref{proba}}

\begin{align}
\tilde
P_{N,a,b}(\tilde \xi_N<\epsilon N^d | \Omega_N^+)\leq
\frac{\exp(J\epsilon N^d)\sharp\{A : |A|<\epsilon
  N^d\}}{\sum_{A\subset\Lambda_N}\exp(J'{|A|})P_{A^c}(\Omega_{A^c}^+)}
\end{align}
And now we proceed similarily as for the proof with $\delta$-pinning potential:
\begin{align}
\frac1{N^d}\log \tilde
P_{N,a,b}(\tilde \xi_N<\epsilon N^d | \Omega_N^+)\leq
&\frac1{N^d}\log \left(\exp(J\epsilon N^d)\sharp\{A : |A|<\epsilon
  N^d\}\right)\label{stirling}\\
&-\frac1{N^d}\log \sum_{A\subset\Lambda_N}\exp(J'{|A|})P_{A^c}(\Omega_{A^c}^+)\label{prop3}
\end{align}
The right hand side of \eqref{stirling} can be bounded by
$\epsilon(J+1-\log\epsilon)$ as $N$ tends to infinity (by a rough
approximation and the Stirling formula), which in turn
can be made as close to 0 as we want by choosing
$\epsilon=\epsilon(J)$ sufficiently small. See \cite{BDZ}.\\

To bound \eqref{prop3} we use \cite[Proposition 3]{BDZ} \underline{with
  $t=0$} which matches to our Proposition \ref{new-prop3} :
\begin{align}
\eqref{prop3}&\leq -\frac1{N^d}\log \sum_{A\subset\Lambda_N \,:\, A \,is\, \Delta-sparse }\exp(J'{|A|})P_{A^c}(\Omega_{A^c}^+)\\
&\leq -\frac1{N^{d}} \left( \left(\frac
    N\Delta\right)^d[(d\log\Delta+c_0)+J'
  -d\log\Delta+c_1\log\log\Delta]\right)\\
&=-\frac{J'+c_0+c_1\log\log\Delta}{\Delta^d}<0 \mbox{ for
  $\Delta=\Delta(J')$ large enough.}
\end{align}
where $\Delta$-sparseness corresponds to Definition \ref{def-delta-sparse} : a set $A\subset\Lambda_N$ is $\Delta$-sparse if it equals $A_{l_N^\Delta}$, for some set $l_N^\Delta$.

\end{proof}

\end{document}